\newtheorem{theo}{Theorem}
\newtheorem{lemma}[theo]{Lemma}
\newtheorem{corollary}[theo]{Corollary}
\newtheorem{problem}[theo]{Problem}
\theoremstyle{definition}
\theoremstyle{remark}
\def\g{\overrightarrow{g}}
\def\w{\overrightarrow{w}}
\def\C{\overrightarrow{C}}
\def\V{\mathcal{V}}
\def\A{\mathcal{A}}
\newcounter{casenum}[theo]
\newcounter{subcasenum}[theo]
\newcounter{claimnum}[theo]
\begin{document}
\thispagestyle{plain}

\begin{center} {\Large The  set of stable indices of 0-1 matrices with a given order
}
\end{center}
\pagestyle{plain}
\begin{center}
{
  {\small Zhibing Chen, Zejun Huang\footnote{Corresponding author ( Email:   mathzejun@gmail.com)     }}\\
  {\small College of Mathematics and Statistics, Shenzhen University, Shenzhen 518060, China }\\

}
\end{center}

\begin{center}

\begin{minipage}{140mm}
\begin{center}
{\bf Abstract}
\end{center}
{\small   The stable index of a 0-1 matrix  $A$ is defined to be the smallest integer $k$ such that $A^{k+1}$  is not a 0-1 matrix if such an integer exists; otherwise the stable index of $A$ is defined to be infinity.   We characterize the set of stable indices of 0-1 matrices  with a given order.

{\bf Keywords:} 0-1 matrix, digraph,  stable index, walk }

{\bf Mathematics Subject Classification:} 05C50, 05C20, 15A99
\end{minipage}
\end{center}

\section{Introduction and Main Results }

    Combinatorial problems on the power of nonnegative matrices is an interesting topic in matrix theory. One of the  classical problems in this topic is the characterization of the exponents of primitive matrices with a given order. Let $A$ be a nonnegative matrix. Denote by $\rho(A)$ the spectral radius of $A$. By the famous Perron-Frobenius theorem we know that $\rho(A)$ is an eigenvalue of $A$. If $A$ has no other eigenvalue of modulus $\rho(A)$, then   $A$  is called {\it primitive}. Frobenius  proved that a nonnegative square matrix is primitive if and only if there is a positive integer $k$ such that $A^k$ is positive entrywise. The {\it exponent} of a primitive matrix is  the smallest positive integer $k$ such that $A^k$ is positive. A natural interesting problem is which numbers are the exponents of nonnegative matrices with a given order, which has been solved by  Wielandt \cite{HW}, Dulmage and Mendelsohn \cite{DM}, Lewin and  Vitek \cite{LV}, Shao \cite{JS1} and Zhang \cite{KZ}. It turns out that the set of exponents of primitive matrices of order $n$ is not a consecutive set in the interval $[1, (n-1)^2+1]$, which is quite surprising.

  Analogous to the exponent of primitive matrices, Chen, Huang and Yan \cite{CHY} introduced the concept of {\it stable index} for 0-1 matrices as follows.

 {\bf Definition.} {\it  The stable index of a 0-1 matrix  $A$, denoted by $\theta(A)$, is defined to be the smallest integer $k$ such that $A^{k+1}$  is not a 0-1 matrix if such an integer exists; otherwise the stable index of $A$ is defined to be infinity.
   }

 The stable index of 0-1 matrices is closely related to directed walks of digraphs.   Digraphs in this paper allow loops but do not allow multiple arcs. We follow the terminology on digraphs in \cite{BM}. Directed paths, directed cycles
and directed walks will be abbreviated as paths, cycles and walks, respectively. A path (walk) with initial vertex $u$ and terminal vertex $v$ is called a $uv$-path ($uv$-walk).   A walk (path, cycle) of length $k$ is called a {\it $k$-walk ($k$-path, $k$-cycle)}. The number of vertices in a digraph is called its {\it order}.

  Denote by $M_n\{0,1\}$ the set of 0-1 matrices of order $n$.
Given a matrix $A=(a_{ij})\in M_n\{0,1\}$, we   define its digraph as $D(A)=(\mathcal{V},\mathcal{A})$  with the vertex set $\mathcal{V}=\{1,2,\ldots,n\}$ and the arc set $\mathcal{A}=\{(i,j): a_{ij}=1, 1\le i,j\le n\}$.  Conversely, given a digraph $D=(\mathcal{V},\mathcal{A})$ with a vertex set $\mathcal{V}=\{v_1,v_2,\ldots,v_n\}$ and an arc set $\mathcal{A}$, its {\it adjacency matrix} is defined as $A_D=(a_{ij})_{n\times n}$, where
\begin{equation*}
a_{ij}=\left\{\begin{array}{ll}
1,&\textrm{if } (v_i,v_j)\in \mathcal{A};\\
0,&\textrm{otherwise}.\end{array}\right.
\end{equation*}
 Given $A\in M_n\{0,1\}$, the $(i,j)$-entry of $A^k$ equals $t$ if and only if $D(A)$ has exactly $t$ distinct $ij$-walks of length $k$.

 Using adjacency matrices,  we have an equivalent definition for the stable index of digraphs as follows.

 {\bf Definition.} {\it   The stable index of a digraph $D$, denoted by $\theta(D)$, is defined to be the smallest integer $k$ such that $D$   contains at least two distinct $(k+1)$-walks with the same  initial vertex and  terminal vertex if such an integer exists; otherwise the stable index of $D$ is defined to be infinity. }

Given a digraph $D$,  by the above definitions we have $\theta(D)=\theta(A_D)$. Conversely, given a square 0-1 matrix $A$, we have $\theta(A)=\theta(D(A))$.

  Given a positive integer $n$, we denote  by $s(n)$ the maximum finite stable index of 0-1 matrices of order $n$.
 The precise value of $s(n)$ was determined in \cite{CHY}.  For $n\le 6$, we have $s(2)=1,s(3)=3,s(4)=4,s(5)=6,s(6)=7$; see \cite{CHY}. For $n\ge 7$ we have

\begin{theo} {\rm{\cite{CHY}}}\label{th1}
Let $n\geq7$ be an integer. Then
\begin{equation}\label{eq1}
s(n)=\begin{cases}
           \frac{n^2-1}{4},&\text{if $n$ is odd},\\
           \frac{n^2-4}{4},&\text{if  $n\equiv0$ (mod 4)},\\
           \frac{n^2-16}{4},&\text{if  $n\equiv2$ (mod 4).}
\end{cases}
\end{equation}
\end{theo}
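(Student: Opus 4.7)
The plan is to prove Theorem~\ref{th1} by matching lower and upper bounds, handling the three residue classes of $n\bmod 4$ in a uniform framework.

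For the lower bound I would exhibit a digraph $D_n$ of order $n$ achieving the claimed value. Let $p$ and $q$ be the coprime positive integers with $p+q=n$ that maximize $pq$: take $(p,q)=((n-1)/2,(n+1)/2)$ when $n$ is odd, $(p,q)=(n/2-1,n/2+1)$ when $n\equiv 0\pmod 4$ (both odd, hence coprime), and $(p,q)=(n/2-2,n/2+2)$ when $n\equiv 2\pmod 4$ (both odd with $\gcd$ dividing $4$, hence $\gcd=1$). Let $D_n$ be the vertex-disjoint union of two directed cycles $C_p,C_q$ together with a single arc $u\to v$ for some $u\in C_p$, $v\in C_q$. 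Every $uv$-walk has the form ``loop $a$ times around $C_p$; traverse the arc; loop $b$ times around $C_q$'', so the $uv$-walks of length $\ell$ correspond bijectively to the nonnegative solutions of $pa+qb=\ell-1$. Because $\gcd(p,q)=1$, this equation has at most one solution when $\ell-1<pq$ and exactly two when $\ell-1=pq$, forcing $\theta(D_n)\le pq$. Checking that no other ordered pair of vertices produces an earlier collision (walks between vertices off $\{u,v\}$ only add positive offsets, walks inside a single cycle are unique, and walks from $C_q$ to $C_p$ do not exist) yields $\theta(D_n)=pq$, which matches the claimed $s(n)$ in each residue class.

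For the upper bound I would show that any digraph $D$ of order $n$ with $\theta(D)<\infty$ satisfies $\theta(D)\le s(n)$. Lemma~3 of \cite{CHY} handles strongly connected $D$ via $\theta(D)\le n\le s(n)$, so I may assume the condensation of $D$ is a nontrivial DAG. A preliminary reduction shows that every nontrivial strong component of an extremal $D$ must itself be a directed cycle: a non-cyclic strong component $S$ alone forces $\theta(D)\le |S|\le n$. So $D$ decomposes into disjoint cycles $C_{p_1},\dots,C_{p_r}$ joined by a DAG of connecting paths and trivial components, and two distinct $uv$-walks of equal length, after fixing the sequence of cycles visited, correspond to two distinct loop-count vectors $(a_i)\ne(a_i')$ with $\sum a_ip_i=\sum a_i'p_i$. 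The smallest $\ell$ at which such a collision can occur for a given pair $(u,v)$ is $\min_{i\ne j}p_ip_j$ (achieved by the swap $(a_i,a_j)=(p_j,0)\leftrightarrow(0,p_i)$) plus the unavoidable additive offset from connecting paths and in-cycle displacements. A direct optimization under the vertex budget $\sum p_i+(\text{connecting-path lengths})\le n$ then reduces the analysis to the two-cycle case, where the choice of $(p,q)$ maximizing $pq$ under $p+q\le n$ and $\gcd(p,q)=1$ is exactly the one used in the construction.

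The main obstacle is this last optimization, in two respects. First, one must rigorously rule out the various ways of sharing vertices between cycles: two cycles meeting at a single vertex give $\theta\le p+q-1$ rather than $pq$, because the binomial number of interleavings creates an early collision at length $p+q$, and any shared arc or longer shared walk worsens this further; these estimates must be proved in a unified form that also handles three or more cycles and mixed configurations. Second, for $n\equiv 2\pmod 4$ one has to verify that neither lengthening the connecting path (trading a multiplicative $pq$ for an additive $m-1$), nor shifting vertices to a spurious third cycle, nor breaking the symmetry $p\approx q$ further, can compensate for the mandatory coprimality gap from $(n/2,n/2)$ down to $(n/2-2,n/2+2)$, so that $(n^2-16)/4$ really is optimal and not some intermediate value between $(n^2-4)/4$ and $(n^2-16)/4$.
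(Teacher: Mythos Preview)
The paper does not contain a proof of Theorem~\ref{th1}; it is quoted verbatim from \cite{CHY} and used as a known input to the proof of Theorem~\ref{th2}. So there is no ``paper's own proof'' to compare your proposal against. What the present paper does borrow from the argument in \cite{CHY} (in Case~3 of the proof of Theorem~\ref{th2}) is the structural dichotomy for two distinct equal-length walks: either the union contains a copy of $\overrightarrow{g}(p,k,q)$, or it has the ``two cycles bridged at both ends by paths'' shape, and in each case one bounds $\theta$ accordingly. That is essentially the same case split you are heading toward.

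On substance: your lower bound is correct and is exactly the construction $\overrightarrow{g}(p,q)$ used here, with the right coprime choices in each residue class. Your upper-bound plan is in the right direction but, as you yourself flag, it is not yet a proof. The reduction ``every nontrivial strong component must be a single cycle, else $\theta\le n$'' is fine via Lemma~\ref{le101}, and the observation that two cycles sharing a vertex force $\theta\le p+q-1$ is also fine (the digraph is then strongly connected). The genuine work you have not done is the combinatorial optimization at the end: showing that with two vertex-disjoint cycles $C_p,C_q$ and connecting material, the maximum of $\theta$ over all configurations with $\le n$ vertices is exactly $\max\{\mathrm{LCM}(p,q):p+q=n\}$, and in particular that for $n\equiv 2\pmod 4$ one cannot beat $(n^2-16)/4$ by spending vertices on a longer bridging path or a third cycle. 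Your claim that ``the smallest $\ell$ at which a collision can occur \dots\ is $\min_{i\ne j}p_ip_j$'' is only true when the relevant $p_i,p_j$ are coprime; in general it is $\mathrm{LCM}(p_i,p_j)$, and the distinction matters precisely in the $n\equiv 2\pmod 4$ case you are worried about. Until that optimization is carried out carefully, the upper bound remains a heuristic.
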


 Similar with the  exponent of primitive matrices, we are interested in the following natural problem.

\begin{problem}
Given a positive integer $n$, which numbers can be the stable indices of  0-1 matrices (digraphs) of order $n$?
\end{problem}

We solve this problem in this paper. Denote by $\mathbb{Z}^+$  the set of positive integers and denote by $\text{LCM}(p,q)$  the least common multiple of two integers $p$ and $q$. For $k\in \mathbb{Z}^+$, we use $[k]$ to represent the  set $\{1,2,\ldots,k\}$.
 Let $\Theta(n)$ be the set of stable indices of 0-1 matrices of order $n$.
Then $\Theta(n)$ is also the set of indices of digraphs with $n$ vertices.  Our main result states as follows.

\begin{theo}\label{th2}
Let $n\ge 7$ be an integer. Then
\begin{equation*}\label{m1}
\Theta(n)=\left[s(n-1)+1\right]\cup\{{\rm{LCM}}(p,q):p+q=n, p,q\in \mathbb{Z}^+\}\cup\{\infty\}.
\end{equation*}
 \end{theo}
 For the sake of convenience, we prove Theorem \ref{th2} for digraphs.

\section{Proof of Theorem \ref{th2}}

Two digraphs $D_1=(\mathcal{V}_1,\mathcal{A}_1)$ and $D_2=(\mathcal{V}_2,\mathcal{A}_2)$ are {\it  isomorphic}, written $D_1\cong D_2$, if there is a bijection $\sigma: \mathcal{V}_1\rightarrow \mathcal{V}_2$ such that $(u,v)\in \mathcal{A}_1$ if and only if $(\sigma(u),\sigma(v))\in \mathcal{A}_2$. It is clear that two digraphs are isomorphic if and only if their adjacency matrices are permutation similar.  We say a digraph $D$ contains a {\it copy} of $H$ if $D$ has a subgraph isomorphic to $H$.

 In a digraph $D$, if there is a walk from $u$ to $v$ for all  $u,v\in\V(D)$, then $D$ is said to be {\it strongly connected}. A digraph  is strongly connected if and only if its adjacency matrix is irreducible.

 Denote by $\overrightarrow{C_k}$  a $k$-cycle.    Given an integer $k\ge 2$ and two disjoint    cycles $\overrightarrow{C_p}$ and  $\overrightarrow{C_q}$, let $\overrightarrow{g}(p,k,q)$ be the digraph  obtained by adding a $(k-1)$-path from a vertex of $\overrightarrow{C_p}$ to a vertex of $\overrightarrow{C_q}$, which has the following diagram.
 When $k=2$, we abbreviate $\overrightarrow{g}(p,k,q)$  as $\overrightarrow{g}(p,q)$.
 \begin{center}
   \begin{tikzpicture}[>=stealth]

\draw[->](0,0)arc[start angle =180,end angle=-180,radius=0.6cm];
\draw(0.6,0)node[scale=1]{$\overrightarrow{C_{p}}$};
\node[shape=circle,fill=black,scale=0.12](a)at (1.2,0){m};
\draw(1.4,-0.25)node[scale=1]{$v_{1}$};
\node[shape=circle,fill=black,scale=0.12](b) at (2.2,0){n};
\draw(2.2,-0.25)node[scale=1]{$v_{2}$};
\node[shape=circle,fill=white,scale=0.12](c)at (3.2,0){};
\node[shape=circle,fill=black,scale=0.12](d) at (3.45,0){};
\node[shape=circle,fill=black,scale=0.12](e) at (3.7,0){};
\node[shape=circle,fill=black,scale=0.12](f) at (3.95,0){};
\node[shape=circle,fill=white,scale=0.12](g)at (4.2,0){};

\node[shape=circle,fill=black,scale=0.12](h)at (5.2,0){r};
\draw(5,-0.25)node[scale=1]{$v_{k}$};
%\node[shape=circle,fill=black,scale=0.12](i)at (6.2,0){s};

\draw[->](a)--(b);
\draw[->](b)--(c);
\draw[->](g)--(h);
%\draw[->](h)--(i);
\draw[->](6.4,0)arc [start angle =360,end angle=0,radius=0.6cm];
\draw(5.8,0)node[scale=1]{$\overrightarrow{C_{q}}$};
\draw(3.7,-1)node[scale=1]{$\overrightarrow{g}(p,k,q)$};
\end{tikzpicture}
 \end{center}
 It is clear that $$\theta(\overrightarrow{g}(p,k,q))=\text{LCM}(p,q)+k-2.$$
Denote by $C_p$ the adjacency matrix of the $p$-cycle $v_1\cdots v_pv_1$, which is $$\begin{bmatrix}& 1&&\\
                               &&\ddots&\\
                               &&&1\\
                               1&&&
                               \end{bmatrix}.$$

We need the following lemmas.
\begin{lemma}\label{le101}
Let $D$ be a strongly connected digraph of order $n\ge 2$. If $D$ is a cycle, then   $\theta(D)=
\infty$; otherwise, we have
$\theta(D)\le n$.
\end{lemma}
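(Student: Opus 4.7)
The argument splits on whether $D$ is a cycle.

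If $D$ is the directed $n$-cycle, every vertex has out-degree exactly one, so each walk is fully determined by its starting vertex and length. Hence no pair of vertices admits two distinct walks of the same length, giving $\theta(D)=\infty$.

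Now suppose $D$ is strongly connected of order $n$ but not a cycle. First I claim $D$ has at least $n+1$ arcs: strong connectivity forces $d^{+}(v)\ge 1$ for every $v$, so $D$ has at least $n$ arcs, and equality would force every out-degree to equal one, rendering $D$ a disjoint union of cycles, which strong connectivity collapses to a single cycle. Hence some vertex $v$ has out-degree at least two. Pick two distinct out-neighbors $u_{1},u_{2}$ of $v$ and shortest $u_{i}\to v$ paths $P_{i}$; the cycles $C_{i}$ given by $v\to u_{i}\to\cdots\to v$ (appending $P_{i}$) have lengths $\ell_{i}=|P_{i}|+1\le n$ and begin with different arcs at $v$.

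If $\ell_{1}=\ell_{2}$, the cycles $C_{1},C_{2}$ are themselves two distinct $vv$-walks of common length $\le n$, so $\theta(D)\le n-1$. Otherwise, I concatenate: ``$C_{1}$ then $C_{2}$'' and ``$C_{2}$ then $C_{1}$'' are two distinct $vv$-walks of length $\ell_{1}+\ell_{2}$, so it suffices to show $\ell_{1}+\ell_{2}\le n+1$. When $V(C_{1})\cap V(C_{2})=\{v\}$, this is immediate from $|V(C_{1})\cup V(C_{2})|=\ell_{1}+\ell_{2}-1\le n$. If $C_{1}$ and $C_{2}$ share another vertex $w$, I decompose each $C_{i}$ into its $v\to w$ subpath $P_{i}^{vw}$ (length $s_{i}$) and its $w\to v$ subpath $P_{i}^{wv}$ (length $\ell_{i}-s_{i}$) and ``braid'' them at $w$: if the two $w\to v$ halves coincide, then $C_{1}\cdot P_{2}^{vw}$ and $C_{2}\cdot P_{1}^{vw}$ are two distinct $v\to w$ walks of equal length $\ell_{1}+s_{2}=\ell_{2}+s_{1}$, which a vertex-count of $V(C_{1})\cup V(C_{2})$ bounds by $n+1$; otherwise, swapping the two $w\to v$ halves between $C_{1}$ and $C_{2}$ yields a new pair of cycles through $v$ with total length still $\ell_{1}+\ell_{2}$ but strictly smaller pairwise intersection, permitting induction on $|V(C_{1})\cap V(C_{2})|$.

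\textbf{Main obstacle.} The delicate step is the braiding reduction: I must verify that a swap either genuinely reduces $|V(C_{1})\cap V(C_{2})|$ or else falls into the ``coinciding halves'' sub-case, and that any repeated internal vertices in a braided closed walk can be absorbed by extracting a still-shorter cycle through $v$. Every other part of the argument is a routine vertex count or walk concatenation.
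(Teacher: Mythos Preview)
The paper does not prove this lemma at all; it is quoted from \cite{CHY} (their Lemma~3) and used as a black box. So there is no ``paper's own proof'' to compare against, and your plan must stand on its own.

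The cycle case and the opening of the non-cycle case (existence of $v$ with $d^{+}(v)\ge 2$, two simple cycles $C_{1},C_{2}$ through $v$, the equal-length sub-case, and the vertex-disjoint sub-case) are all fine. The problem is precisely where you flag it: the braiding reduction does not work as written.

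First, in the ``coinciding halves'' sub-case your vertex count only yields $\ell_{1}+s_{2}\le n+1$ when the two $v\to w$ paths $P_{1}^{vw},P_{2}^{vw}$ meet only in $\{v,w\}$. If they share further internal vertices, inclusion--exclusion gives you nothing. This is repairable, but only if you commit to a specific choice of $w$ (e.g.\ the \emph{first} vertex on $C_{1}$ after $v$ that also lies on $C_{2}$), and you have not done so.

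Second, and more seriously, the ``swap'' sub-case is unsupported. After exchanging the $w\to v$ halves you obtain closed walks $C_{1}'=P_{1}^{vw}\cdot P_{2}^{wv}$ and $C_{2}'=P_{2}^{vw}\cdot P_{1}^{wv}$, but (i) there is no reason $|V(C_{1}')\cap V(C_{2}')|<|V(C_{1})\cap V(C_{2})|$ in general, and (ii) $C_{1}',C_{2}'$ need not be simple cycles, so your inductive hypothesis (which tacitly uses simplicity for the vertex counts) no longer applies. A small example already breaks the direct $\ell_{1}+\ell_{2}\le n+1$ bound: with $n=5$, vertices $v,a,b,c,d$ and arcs $v\to a,\ v\to b,\ a\to b,\ b\to c\to d\to v$, the two shortest-path cycles have lengths $5$ and $4$, so $\ell_{1}+\ell_{2}=9$; the braiding must genuinely do work here, and your sketch does not show that it terminates correctly.

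In short: the plan is plausible but the induction is not set up in a way that can be carried through. You should either (a) pin down the choice of $w$ and replace the ``swap'' step by a different reduction that provably decreases a well-founded quantity while preserving simplicity, or (b) abandon braiding and look for a shorter pair of equal-length walks directly, e.g.\ by routing the second cycle through the first time it meets $C_{1}$ rather than all the way back to $v$.
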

\begin{proof}
Recall that $\theta(D)=\theta(A_D)$. Since $D$ is strongly connected if and only if $A_D$ is irreducible, the result follows from \cite[Lemma 3]{CHY}.
\end{proof}
\begin{lemma}\label{le1}
 Let  $A=\left(\begin{array}{cc}C_p&X\\0&C_q\end{array}\right)\in M_{p+q}\{0,1\} $, where $X$ has exactly $k$ nonzero entries. Then
 $$\theta(A)\le  \big\lfloor  {\frac{pq}{k}}\big\rfloor.$$
\end{lemma}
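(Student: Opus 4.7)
My plan is a direct pigeonhole argument on the block structure of $A^m$. First I would compute, by induction on $m$, the block form
\[
A^m=\begin{pmatrix} C_p^m & B_m\\ 0 & C_q^m\end{pmatrix},\qquad B_m=\sum_{i=0}^{m-1}C_p^{\,m-1-i}\,X\,C_q^{\,i}.
\]
Since $C_p$ and $C_q$ are permutation matrices, $C_p^m$ and $C_q^m$ are $0$--$1$ matrices for every $m$, so $A^m$ is $0$--$1$ if and only if $B_m$ is $0$--$1$.

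Next I would observe that for each $i$, the matrix $C_p^{m-1-i}XC_q^{i}$ is obtained from $X$ by permuting its rows (via $C_p^{m-1-i}$) and its columns (via $C_q^{i}$); in particular it is a $0$--$1$ matrix whose entries sum to $k$. Therefore the entries of $B_m$ sum to $mk$, while $B_m$ has only $pq$ entries (it is $p\times q$). By the pigeonhole principle, if
\[
mk>pq,
\]
then at least one entry of $B_m$ is $\ge 2$, i.e.\ $A^m$ is not a $0$--$1$ matrix.

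To finish, I would set $m=\lfloor pq/k\rfloor+1$. By definition of the floor, $(\lfloor pq/k\rfloor+1)k>pq$, so $mk>pq$ and consequently $A^m$ is not $0$--$1$. The definition of the stable index then yields $\theta(A)+1\le m$, that is, $\theta(A)\le \lfloor pq/k\rfloor$, as desired.

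There is no real obstacle: the only things to be careful about are the correct indexing in the telescoping-type formula for $B_m$ (a one-line induction) and the strict inequality $mk>pq$ at the chosen $m$ (immediate from the definition of the floor). The argument uses nothing beyond the fact that $C_p,C_q$ are permutation matrices and that row/column permutations preserve the entry sum of $X$.
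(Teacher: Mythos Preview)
Your argument is correct and is essentially identical to the paper's proof: both compute the upper-right block of $A^m$ as $\sum_{i=0}^{m-1}C_p^{m-1-i}XC_q^{i}$, note that each summand has entry sum $k$ (since left and right multiplication by permutation matrices just relocates entries), and conclude by pigeonhole that $B_m$ fails to be $0$--$1$ once $mk>pq$. The only differences are cosmetic (your indexing and your explicit mention of the pigeonhole principle).
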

\begin{proof}
Notice that   $$A^{m}=\left(\begin{matrix}{C_p}^{m}&
\sum_{k=0}^{m-1}{C_p}^kX{C_q}^{m-1-k}\\0&{C_q}^{m}\\
\end{matrix}\right)\equiv \left(\begin{matrix}{C_p}^{m} &B\\0& {C_q}^{m}\end{matrix}\right)$$ for all positive integer $m$. Since ${C_p}^kX{C_q}^{m-1-k}$ only changes the positions of $X$'s entries, each ${C_p}^kX{C_q}^{m-1-k}$ has exactly $k$ entries equal to 1. So the summation of all entries in $B$ is  $km$. If $m>\lfloor  pq/k\rfloor$, then $km>pq$, which implies that $B$ is not a 0-1 matrix. Therefore, $\theta(A)\le\lfloor  pq/k\rfloor$.
\end{proof}

\begin{corollary}\label{co1}
Let $p,q,n $ be positive integers such that $p+q=n$. Suppose $D$ is a digraph of order  $n$ containing a copy of $\overrightarrow{g}(p,q)$. If $\theta(D)>\max\{n,\lfloor  pq/2\rfloor\}$, then $D\cong\overrightarrow{g}(p,q)$.
\end{corollary}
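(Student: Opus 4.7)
I would argue by contradiction. Assume $D\not\cong\overrightarrow{g}(p,q)$; I will show $\theta(D)\le\max\{n,\lfloor pq/2\rfloor\}$, against the hypothesis. Since $D$ has order exactly $n=p+q$, the embedded copy of $\overrightarrow{g}(p,q)$ uses every vertex; label so that $\{v_1,\ldots,v_p\}$ spans the $p$-cycle $\overrightarrow{C_p}$, $\{u_1,\ldots,u_q\}$ spans $\overrightarrow{C_q}$, and $v_p\to u_1$ is the connecting arc of the copy. Because $D\not\cong\overrightarrow{g}(p,q)$, $D$ has some extra arc $e$, falling into one of four types: (i) inside $\{v_1,\ldots,v_p\}$ (a chord or loop on $\overrightarrow{C_p}$); (ii) inside $\{u_1,\ldots,u_q\}$; (iii) from some $v_i$ to some $u_j$ other than $v_p\to u_1$; (iv) from some $u_j$ to some $v_i$.

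If any type-(iv) arc exists, then $D$ is strongly connected (both $\overrightarrow{C_p}$ and $\overrightarrow{C_q}$ are, and there are arcs in both directions between them) yet not a single cycle, since it contains the two vertex-disjoint cycles $\overrightarrow{C_p}$ and $\overrightarrow{C_q}$. Lemma \ref{le101} then yields $\theta(D)\le n$, contradicting $\theta(D)>n$.

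Otherwise no arc runs from $\{u_1,\ldots,u_q\}$ to $\{v_1,\ldots,v_p\}$. If there is a type-(i) arc, the induced subdigraph $D[\{v_1,\ldots,v_p\}]$ contains $\overrightarrow{C_p}$ plus at least one extra arc, hence is strongly connected but not a cycle; Lemma \ref{le101} gives $\theta(D[\{v_1,\ldots,v_p\}])\le p$. Since two distinct walks in a subdigraph remain two distinct walks in $D$, this forces $\theta(D)\le p\le n$, a contradiction. Type (ii) is symmetric with bound $q\le n$.

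In the remaining situation, the only extra arcs are of type (iii); ordering the vertices as $v_1,\ldots,v_p,u_1,\ldots,u_q$, the adjacency matrix is exactly $A_D=\begin{pmatrix}C_p & X\\ 0 & C_q\end{pmatrix}$ with $|X|\ge 2$, so Lemma \ref{le1} gives $\theta(D)\le\lfloor pq/|X|\rfloor\le\lfloor pq/2\rfloor$, again a contradiction. The main subtlety is handling the chord cases (i) and (ii): $D$ itself may fail to be strongly connected, so Lemma \ref{le101} cannot be applied to $D$ directly. The trick is to pass to the induced subdigraph on the cycle containing the chord, which is always strongly connected and non-cyclic, and then transfer the resulting stable-index bound from the subdigraph back to $D$ via the observation that duplicate walks persist under subgraph inclusion.
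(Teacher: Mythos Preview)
Your proof is correct and follows essentially the same route as the paper's: partition the possible extra arcs into the four blocks of $A_D=\begin{pmatrix}C_p+A_{11}&A_{12}\\A_{21}&C_q+A_{22}\end{pmatrix}$, use Lemma~\ref{le101} on the whole digraph for $A_{21}\ne 0$ and on the induced subdigraph for $A_{11}\ne 0$ or $A_{22}\ne 0$, and finish with Lemma~\ref{le1} when only $A_{12}$ is enlarged. The paper records the subgraph step as the inequality $\theta(D)\le\theta(C_p+A_{11})$, which is exactly your ``duplicate walks persist under inclusion'' observation.
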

\begin{proof}
 Since   $D$ contains a copy of $\overrightarrow{g}(p,q)$,  we may assume $$A_D=\left(\begin{matrix}{C_p}+A_{11} &
A_{12}\\A_{21}&{C_q}+A_{22} \\
\end{matrix}\right),$$
where $A_{12}\ne 0$.

If $A_{21}\ne 0$, then $A_D$ is irreducible. Applying Lemma \ref{le101} we have $\theta(D)\le n $, a contradiction. Therefore, we have $A_{21}=0$.

If $A_{11}\ne 0$, then since $C_p+A_{11}$ is irreducible, we have $\theta(D)\le \theta(C_p+A_{11}) \le p$, a contradiction. Hence, $A_{11}=0$. Similarly, we have $A_{22}=0$.

Now applying Lemma \ref{le1}, we can deduce that $A_{12}$ has exactly one nonzero entry. Therefore, $D\cong\overrightarrow{g}(p,q)$.
\end{proof}

\begin{lemma}\label{le2}
Suppose $p,q>0$ are relatively prime numbers. Let $kq\equiv x_k$ (mod $p$) for $1\le k\le p-1$. Then
$$\{x_1,x_2,\ldots,x_{p-1}\}=\{1,2,\ldots,p-1\}.$$
\end{lemma}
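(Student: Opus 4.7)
The plan is to show that the map $k \mapsto x_k$ is an injection from $\{1, 2, \ldots, p-1\}$ into itself, which forces it to be a bijection onto the same set, hence the two sets coincide.

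First I would check that every $x_k$ lies in $\{1, 2, \ldots, p-1\}$, i.e.\ that $x_k \ne 0$. Indeed, if $x_k = 0$ for some $1 \le k \le p-1$, then $p \mid kq$; since $\gcd(p, q) = 1$, this forces $p \mid k$, contradicting $1 \le k \le p-1$. So each $x_k \in \{1, 2, \ldots, p-1\}$.

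Next I would verify injectivity. If $x_i = x_j$ for some $1 \le i, j \le p-1$, then $iq \equiv jq \pmod{p}$, so $p \mid (i-j)q$. Using $\gcd(p, q) = 1$ again, we get $p \mid (i - j)$, and since $|i - j| \le p - 2 < p$, we conclude $i = j$. Therefore the assignment $k \mapsto x_k$ gives $p - 1$ distinct values in the $(p-1)$-element set $\{1, 2, \ldots, p-1\}$, which forces $\{x_1, \ldots, x_{p-1}\} = \{1, 2, \ldots, p-1\}$.

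There is no real obstacle here; the statement is a direct consequence of the fact that multiplication by a unit in $\mathbb{Z}/p\mathbb{Z}$ is a bijection, and the two short implications above are essentially the only work required. The only thing to be slightly careful about is to spell out that $\gcd(p,q) = 1$ is used in both the ``nonzero'' step and the ``distinct'' step.
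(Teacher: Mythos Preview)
Your proof is correct and follows essentially the same approach as the paper: both argue that $x_i = x_j$ forces $p \mid (i-j)q$, hence $p \mid (i-j)$ by coprimality, contradicting $|i-j| < p$. Your version is in fact slightly more careful, since you explicitly verify $x_k \ne 0$ (so that the codomain really is $\{1,\ldots,p-1\}$), a step the paper's proof leaves implicit.
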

\begin{proof}
It suffices to prove $x_u\ne x_v$ for all $u>v$. Otherwise, suppose there exist $u,v\in\{1,2,\ldots,p-1\}$ such that $u>v$ and $x_u=x_v$. Then $x_u-x_v=(u-v)q$ is divided by $p$. Since $(p,q)=1$, we have  $p\mid u-v$,  which contradicts the fact $1\le u-v<p-1$. This completes the proof.
\end{proof}
\begin{corollary}\label{co2}
Suppose $p,q $ are relatively prime numbers such that $p>q>1$. Then
$$\min\{u: \text{ there exists a nonnegative number } v \text{ such that }p-q=uq-vp\}=p-1.$$
\end{corollary}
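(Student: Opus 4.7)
The plan is to reduce the problem to a congruence modulo $p$ and then invoke Lemma \ref{le2}. Rewriting the defining equation $p-q=uq-vp$ as
\[
uq \equiv p-q \equiv -q \pmod{p},
\]
and using $\gcd(p,q)=1$ to cancel $q$, we obtain the single congruence condition $u \equiv -1 \equiv p-1 \pmod{p}$. Hence the admissible values of $u$ form the arithmetic progression $\{p-1, 2p-1, 3p-1,\ldots\}$ together with possibly $u=0$ or smaller nonnegative residues, which we rule out below.

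First I would verify that $u=p-1$ is actually achievable: setting $u=p-1$ forces $v=((p-1)q-(p-q))/p=(pq-p)/p=q-1$, and since $q>1$ this $v$ is a nonnegative integer, so $u=p-1$ is a valid candidate.

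Next I would show no smaller nonnegative $u$ works. For $u=0$ we would need $vp=q-p<0$, impossible for nonnegative $v$. For $1\le u\le p-2$, Lemma \ref{le2} says the residues $x_1,\ldots,x_{p-1}$ of $q,2q,\ldots,(p-1)q$ modulo $p$ are a permutation of $\{1,\ldots,p-1\}$, and the computation $(p-1)q=pq-q\equiv p-q\pmod p$ identifies $x_{p-1}=p-q$. Hence $x_u\ne p-q$ for $u\in\{1,\ldots,p-2\}$, which means no such $u$ can satisfy $uq\equiv p-q\pmod p$, and therefore no nonnegative $v$ exists for these $u$.

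Combining the two steps gives the minimum value $p-1$. The argument is essentially a direct bookkeeping exercise once Lemma \ref{le2} is in hand; the only place that needs a little care is excluding $u=0$ separately (since Lemma \ref{le2} is stated for $1\le k\le p-1$) and confirming nonnegativity of $v=q-1$, which is where the hypothesis $q>1$ is used.
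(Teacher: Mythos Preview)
Your proof is correct and follows the paper's approach: reduce the equation to the congruence $uq\equiv p-q\pmod p$ and invoke Lemma~\ref{le2}, with the added care of excluding $u=0$ separately and verifying that $v=q-1\ge 0$. Note, incidentally, that your cancellation step giving $u\equiv p-1\pmod p$ already pins down the minimum nonnegative $u$ without any appeal to Lemma~\ref{le2}, so the subsequent invocation of the lemma is redundant (though harmless).
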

\begin{proof}
Notice that $(p-1)q\equiv p-q$ (mod $p$). Applying Lemma \ref{le2} we have the conclusion.
\end{proof}

Given two walks $\w_1$ and $\w_2$ in a digraph $D$, we denote by $\w_1\cup \w_2$ the union of $\w_1$ and $\w_2$, i.e.,  the subgraph of $D $ with vertex set $\V(\w_1)\cup \V(\w_2)$  and arc set $\A(\w_1)\cup\A(\w_2)$.
From the proof of \cite[Theorem 1]{CHY} we have the following lemma. For completeness we restate its proof.
\begin{lemma}\cite{CHY}\label{le9}
Suppose a digraph $D$   of order $n$ contains two distinct walks  $\w_1$ and $\w_2$ with the same length from $x$ to $y$ for some vertices $x$ and $y$.   If $\w_1\cup \w_2$  contains at most one cycle, then $\theta(D)<n-1$.
\end{lemma}
\begin{proof}
If $\w_1\cup \w_2$ is acyclic, then both $\w_1$ and $\w_2$ are directed paths with length less than $n-1$. Therefore, $\theta(D)<n-1$.

Now suppose $\w_1\cup \w_2$ contains exactly one cycle $\C_p$. If only one of the two walks, say $\w_1$, contains copies of $\C_p$, then $\w_2$ is a directed path with length less than $n$, which implies $\theta(D)<n-1$.
If both $\w_1$ and $\w_2$ contain copies of $\C_p$, then by deleting the same number of copies of $\C_p$ in both $\w_1$ and $\w_2$, we can get two distinct walks with the same length from  $x$ to $y$ such that  at least one of them contains no cycle. Again, we have $\theta(D)<n-1$.
\end{proof}

\par
For the sake of simplicity, we denote by
$$[a,b]=\{x\in \mathbb{Z}^+: a\le x\le b\}.$$ Now we are ready to present the proof of our main result.\\

 {\it Proof of Theorem \ref{th2}.}
 We first show
  \begin{equation}\label{eq4}
\Theta(n)\subseteq [s(n-1)+1]\cup\{\text{LCM}(p,q):p+q=n,p,q\in \mathbb{Z}^+\}\cup \{\infty\}.
\end{equation}
Suppose  $D$ has finite stable index $r$.  Then $D $ has two distinct $(r+1)$-walks $\w_1$ and $\w_2$   from $x$ to $y$ for some vertices $x,y$.

 If    $\w_1\cup \w_2$ contains at most one cycle,  then applying Lemma \ref{le9} we have $\theta(D)<n-1$. If $\w_1\cup \w_2$ contains two cycles whose vertex sets have nonempty intersection, then the subgraph $D_1$ induced by these two cycles is strongly connected. Applying Lemma \ref{le101}, we have $\theta(D)\le \theta(D_1)\le n$.
  Now suppose $\w_1\cup \w_2$ contains two disjoint cycles. We distinguish   three cases.

 {\it Case 1.} $\w_1\cup \w_2$ contains a copy of $\g(p,q)$ with $p+q=n$. Then applying Corollary \ref{co1}, we have either
 $\theta(D)\le \max\{n,\lfloor pq/2\rfloor\}$ or $D\cong\g(p,q)$, while the later case leads to $\theta(D)=\text{LCM}(p,q)$. Thus
 $$\theta(D)\in \left[  \max\{n,\lfloor n/2\rfloor\lceil n/2\rceil/2 \} \right]\cup\{\text{LCM}(p,q): p+q=n, p,q\in \mathbb{Z}^+\}.$$
Since $\max\{n,\lfloor n/2\rfloor\lceil n/2\rceil/2 \}\le s(n-1)+1$, we have
 $$\theta(D)\in \left[  s(n-1)+1 \right]\cup\{\text{LCM}(p,q): p+q=n, p,q\in \mathbb{Z}^+\}.$$

 {\it Case 2.} $\w_1\cup \w_2$ contains no copy of $\g(p, q)$ with $p+q=n$ and it contains a copy of $\g(p_1,k_1, q_1)$ for some positive integers $p_1,q_1,k_1$ with $p_1+q_1<n$.  Let
 $$\lambda_1(n)=\max\{\theta(\g(p,k,q)):p+q+k-2\le n, p+q\le n-1\}.$$

Note that
 $\theta(\g(p,k,q))=\text{LCM}(p,q)+k-2 $ and $$s(n)=\max\{\text{LCM}(p,q):p+q=n\}\quad \text{for}\quad n\ge 7.$$
 If $n=7$, since $s(6)=7$, we have $$\lambda_1(7)= \theta(\g(2,4,3))=8=s(6)+1.$$ For $n\ge 8$, since
$s(x)-x$ is strictly increasing on the integer variable  $x$ when $x\ge 2$, we have
$$\theta(\g(p,k,q))=\text{LCM}(p,q)+k-2\le s(p+q)+n-p-q\le s(n-1)+1.$$
Hence,
  $$\lambda_1(n)=s(n-1)+1 \quad \text{for }\quad n\ge 7$$
  and $$\theta(D)\le s(n-1)+1.$$

 {\it Case 3.} $\w_1\cup \w_2$  does not contain any copy of $\g(p,k,q)$.
 Then similarly as in the proof of Theorem 1 in \cite{CHY},  $\w_1\cup \w_2$ has the following diagram.
\begin{center}
 \begin{tikzpicture}[>=stealth]
\draw[->](1.5,0.5)arc[start angle =90,end angle=-270,radius=0.5cm];
\draw(1.5,0)node[scale=1]{$\overrightarrow{C_{p}}$};
\node[shape=circle,fill=black,scale=0.12](a)at (-0.5,-0.75){x};
\draw(-0.7,-0.8)node[scale=1]{$x$};
\draw[->](-0.5,-0.75)arc[start angle =160,end angle=85,radius=1.5cm];
\draw[->](-0.5,-0.75)arc[start angle =-160,end angle=-87,radius=1.5cm];
\draw(3.2,0.1)node[scale=1]{$\overrightarrow{w}_{1}$};
\draw[<-](3.5,-0.75)arc[start angle =20,end angle=95,radius=1.5cm];
\draw[<-](3.5,-0.75)arc[start angle =-20,end angle=-93,radius=1.5cm];
\draw(3.2,-1.7)node[scale=1]{$\overrightarrow{w}_{2}$};
\node[shape=circle,fill=black,scale=0.12](b) at (3.5,-0.75){y};
\draw(3.7,-0.8)node[scale=1]{$y$};
\draw[->](1.5,-2.25)arc [start angle =-90,end angle=-450,radius=0.5cm];
\draw(1.5,-1.75)node[scale=1]{$\overrightarrow{C_{q}}$};
\end{tikzpicture}
\end{center}
Suppose the $xy$-paths contained in $\w_1$  and   $\w_2$ have lengths $l$ and $t$, respectively. Then we have
\begin{eqnarray}
\theta(D)&=&\min\{l+up: l+up=t+vq \text{ for some nonnegative integers } u,v \}-1\nonumber\\
&\le& \min\{l+(q-1)p, t+(p-1)q\}-1.\label{eq2}
\end{eqnarray}
Next we show
\begin{equation*}\label{eq3}
\theta(D)\le s(n-1)+1.
\end{equation*}

If $p=q$, then we have $\theta(D)\le \max\{l,t\} < s(n-1)+1$. Next we assume $p> q$. Since $l\le n-1-q$, $t\le n-1-p$ and $p+q\le n-2$, by (\ref{eq2}) we have
\begin{eqnarray*}
\theta(D)&\le&t+(p-1)q-1\le n-2-p+(p-1)q=n-3+(p-1)(q-1)\equiv h(n,p,q).
\end{eqnarray*}
If $n$ is odd, then $\max\{h(n,p,q):p+q\le n-2,p>q\}$ is attained at $(p,q)=((n-1)/2,  (n-3)/2).$ Hence, by (\ref{eq1}) we have
\begin{eqnarray*}
\theta(D) \le  h(n,p,q)
 \le  n-3+(n-3)(n-5)/4
 = (n^2-4n+3)/4
\le  s(n-1)+1.
\end{eqnarray*}
If $n$ is even, then $\max\{h(n,p,q):p+q\le n-2,p>q\}$ is attained at $(p,q)=(n/2,  n/2-2).$
Hence, by (\ref{eq1}) we have
\begin{eqnarray*}
\theta(D) \le  h(n,p,q)
 \le  n-3+(n/2-1)(n/2-3)
 = (n^2-4n)/4
\le s(n-1)+1.
\end{eqnarray*}

Combining all the above  cases we have (\ref{eq4}).
Now we prove
\begin{equation}\label{eq5}
[s(n-1)+1]\cup\{\text{LCM}(p,q):p+q=n,p,q\in \mathbb{Z}^+\}\cup \{\infty\}\subseteq \Theta(n).
\end{equation}
Notice that $\theta(\overrightarrow{g}(p,q))=\text{LCM}(p,q)$, $\theta(\overrightarrow{C}_n)=\infty$ and $\theta(\overrightarrow{K}_n)=1$, where $\overrightarrow{K}_n$ is the complete digraph of order $n$. It suffices to verify  \begin{equation}\label{eq6}
[2,s(n-1)+1]\subseteq \Theta(n).
\end{equation}

Let $G(p,q)$ be the set of graphs with the following diagrams, where $\overrightarrow{C}_p$ is a $p$-cycle $u_1u_2\cdots u_{p}u_1$, $\overrightarrow{C}_q$ is a $q$-cycle $v_1v_2\cdots v_{q}v_1$, the arrows from $x$ to $\overrightarrow{C}_p$ and $\overrightarrow{C}_q$ represent  two paths $xx_1\cdots x_r u_1$ and $xy_1\cdots y_s v_1$, the arrow from $\overrightarrow{C}_p$ to $\overrightarrow{C}_q$ represents an arc $u_iv_j$ with $1\le i\le p$, $1\le j\le q$.
\begin{figure}[H]
		\centering
		\includegraphics[width=1.5in]{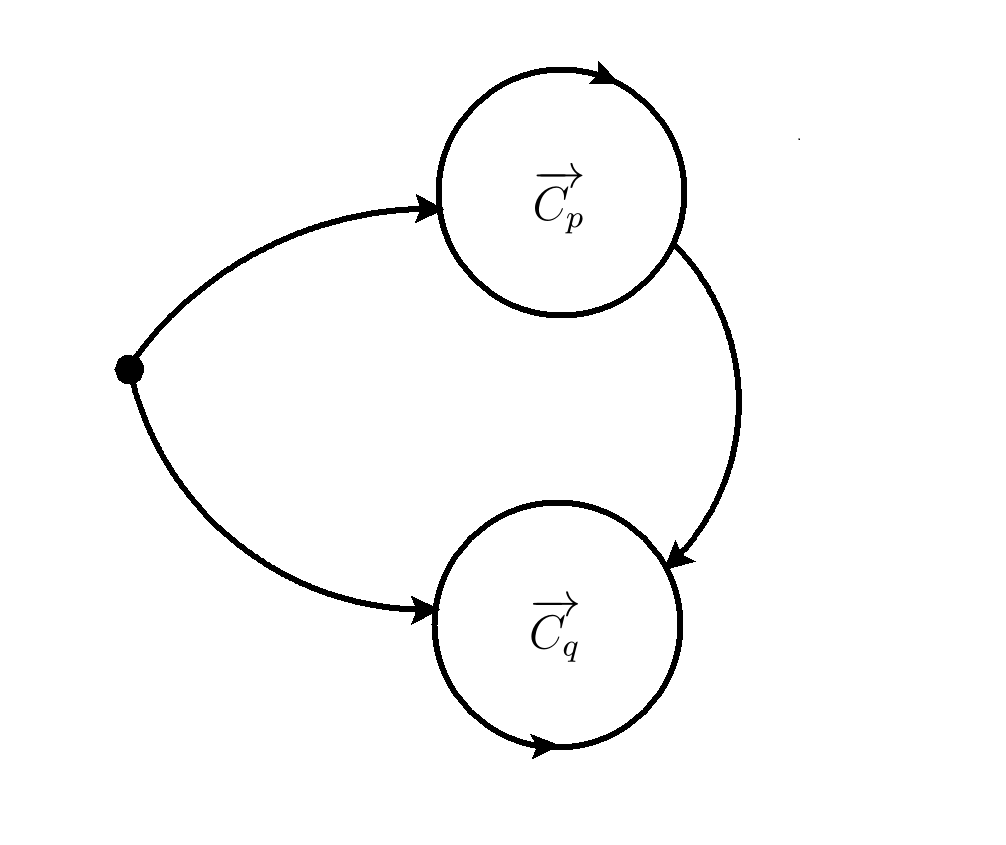}\hspace{0.8cm}
      \vskip -2.2 cm \hspace{-4cm} $x$
      \vskip 1.7cm
      $G(p,q)$
\end{figure}
\vskip -0.5cm
\noindent Let $G(p,q,l,t)$ be the  set of graphs in $G(p,q)$ such that the length of the path  $xx_1\cdots x_r u_1\cdots u_iv_j$ is $l $ and  the length of the path  $xy_1\cdots y_s v_1\cdots  v_j$ is $t$. Then
$$\theta(D)=\min\{l+ap: l+ap=t+bq \text{ for some nonnegative integers }a, b\}-1 $$
for all $D\in G(p,q,l,t)$. We denote by $\theta(G(p,q,l,t))$ the stable index of digraphs in $G(p,q,l,t)$.

If $q=p-1$ and $l-t=d<p$, applying Lemma \ref{le2},
\begin{eqnarray*}
&&\min \{l+ap: l+ap=t+bq \text{ for some nonnegative integers }a, b\}\\
&&=\min \{l+ap: l-t= bq-ap \text{ for some nonnegative integers }a, b\}
\end{eqnarray*}
is attained at $(a,b)=(p-d-1,p-d)$.
Therefore,
$$\theta(G(p,p-1,l,t))=t-1+(p-1)(p-d).$$

Denote by $f(p,q,d)=\{\theta(G(p,q,l,t)):l-t=d\}$.  If $q=p-1\ge 1$ and $d=1$,  then we have $1\le t\le \lceil n/2\rceil-1$ in $G(p,q,l,t)$. Hence,
\begin{eqnarray*}
 f(p,p-1,1)&=&\{w+(p-1)^2:  0\le w\le \lceil n/2\rceil-2\}\\ &=&[(p-1)^2,(p-1)^2+\lceil n/2\rceil-2].
\end{eqnarray*}
If $q=p-1\ge 2$ and $d=2$,  then we have $1\le t\le \lfloor n/2\rfloor-1$ in $G(p,q,l,t)$. Hence,
\begin{eqnarray*}
  f(p,p-1,2)&=&\{w+(p-1)(p-2):  0\le w\le \lfloor n/2\rfloor-2\}\\
  &=&[(p-1)(p-2),(p-1)(p-2)+\lfloor n/2\rfloor-2].
\end{eqnarray*}
Notice that
\begin{eqnarray*}
(p-1)^2\le (p-1)(p-2)+\lfloor n/2\rfloor-1 \quad\text{for}\quad 2\le p\le \lfloor n/2\rfloor, \\
(p-1)(p-2)\le  (p-2)^2+\lceil n/2\rceil-1 \quad\text{for}\quad  2\le p\le \lceil n/2\rceil.
\end{eqnarray*}
We have
\begin{eqnarray*}
\bigcup_{p=3}^{\lfloor n/2\rfloor}\big\{f(p,p-1,1)\cup f(p,p-1,2)\big\} = \big[2,(\lfloor n/2\rfloor-1)^2+\lceil n/2\rceil-2\big]\equiv T(n),
\end{eqnarray*}
which implies  $T(n)\subseteq \Theta(n)$.

Next we distinguish three cases.

{\it Case 1.}   $n$ is even, say, $n=2m$. Then $$(\lfloor n/2\rfloor-1)^2+\lceil n/2\rceil-2=m^2-m-1$$
and $$s(n-1)+1=(n^2-2n+4)/4=m^2-m+1.$$
Note that $$\theta(\overrightarrow{g}(m,m-1))=m^2-m\quad \text{and}\quad  \theta(\overrightarrow{g}(m,3,m-1))=m^2-m+1.$$
We have (\ref{eq6}).

{\it Case 2.} $n \equiv 1$ (mod 4), say, $n=4k+1$. Suppose $m=2k$. Then  $$(\lfloor n/2\rfloor-1)^2+\lceil n/2\rceil-2=m^2-m $$
and $$s(n-1)+1=(n^2-2n+1)/4=m^2.$$
Since $m+1$ and $m-1$ are relatively prime, applying Corollary \ref{co2}, we have
\begin{eqnarray*}
f(m+1,m-1,2)=\{w+m(m-1): 0\le w\le m-2\}=[m^2-m,m^2-2].
\end{eqnarray*}
Moreover, since $m$ is even, we have
$$\theta(\overrightarrow{g}(m+1,m-1))=m^2-1  \quad \text{and}\quad \theta(\overrightarrow{g}(m+1,3,m-1))=m^2=s(n-1)+1.$$
Therefore, we have (\ref{eq6}).

{\it Case 3.} $n \equiv 3$ (mod 4), say, $n=4k+3$. Suppose $m=2k+1$. Then  $$(\lfloor n/2\rfloor-1)^2+\lceil n/2\rceil-2=m^2-m $$
and $$s(n-1)+1=(n^2-2n-11)/4=m^2-3.$$
Since $m+2$ and $m-2$ are relatively prime, applying Corollary \ref{co2} we have
$$f(m+2,m-2,4)=\{w+(m+1)(m-2):0\le w\le m-3\}=[m^2-m-2,m^2-5].$$
On the other hand,
$$\theta(\overrightarrow{g}(m+2,m-2))=m^2-4  \quad \text{and}\quad \theta(\overrightarrow{g}(m+2,3,m-2))=m^2-3=s(n-1)+1.$$
Therefore, we get (\ref{eq6}).

\par
Combining (\ref{eq4}) and (\ref{eq5}), we obtain
$$\Theta(n)=\left[s(n-1)+1\right]\cup\{\text{LCM}(p,q):p+q=n,p,q\in \mathbb{Z}^+\}\cup\{\infty\}.$$
This completes the proof.   \hspace{11cm} $\square$

 {\bf Remark.} Denote by $\overrightarrow{L}_n$ the union of the $k$-cycle $v_1v_2\cdots v_kv_1$ and the 2-cycle $v_1v_nv_1$, where $k=n$ if $n$ is odd and $k=n-1$ if $n$ is even. Then we have $$\theta( \overrightarrow{L}_n)=n \quad \text{for}\quad  n\ge 3 .$$ Note that $$\theta(\overrightarrow{K}_n)=1, \theta(\overrightarrow{g}(1,2))=2, \theta(\overrightarrow{g}(2,3))=6,  \theta(\overrightarrow{g}(2,3,3))=7.$$
 We have  $$\Theta(n)=[s(n)]\cup \{\infty\}  \quad \text{for}\quad  1\le n\le 6 .$$
   When $n\ge 7$, by direct computation  we see that $\Theta(10)=[s(10)]\cup \{\infty\}$ and  $$\left[s(n-1)+1\right]\cup\{\text{LCM}(p,q):p+q=n, p,q\in \mathbb{Z}^+\}\ne [s(n)] \quad\text{for}\quad  n\ne 10.$$ So $\Theta(n)$ has gaps in the set $[s(n)]$ for $n\in \mathbb{Z}^+-[6]\cup\{10\}$.
\section*{Acknowledgement}

The authors are grateful to Professor Xingzhi Zhan for valuable suggestions. This work was supported by the National Natural Science Foundation of China (No. 12171323),  the Science and  Technology Foundation of Shenzhen City (No. JCYJ20190808174211224, No. JCYJ20210324095813036),  Guangdong Basic and Applied Basic Research
Foundation (No. 2022A1515011995) and a project of Educational Commission of Guangdong Province of China (2021KTSCX103).

\end{document}